\documentclass{amsart} 

\usepackage{verbatim, amssymb, enumerate}

\newcommand \la{\lambda}
\newcommand \bc{\mathbb{C}}
\newcommand \br{\mathbb{R}}
\newcommand \bbf{\mathbb{F}}
\newcommand \Rn{\mathbb{R}^n}
\newcommand \Cn{\mathbb{C}^n}
\newcommand \Fn{\mathbb{F}^n}
\newcommand \act{\mathcal{R}}
\newcommand \cS{\mathcal{S}}
\newcommand \<{\langle}
\renewcommand \>{\rangle}
\newcommand \ip {\<\cdot,\cdot\>}

\newcommand \id {\mathrm{id}}
\newcommand \ve {\varepsilon}

\theoremstyle{plane}
\newtheorem{theorem}{Theorem}
\newtheorem*{theorem*}{Theorem}
\newtheorem*{corollary*}{Corollary}
\newtheorem*{conj*}{Conjecture}
\newtheorem{lemma}{Lemma}

\newtheorem*{prop*}{Proposition}

\theoremstyle{definition}
\newtheorem{definition}{Definition}
\newtheorem*{definition*}{Definition}

\theoremstyle{remark}

\begin{document}

\title{The Duality principle for Osserman algebraic curvature tensors}

\author{Y.Nikolayevsky}
\address{Department of Mathematics and Statistics, La Trobe University, Victoria, 3086, Australia}
\email{y.nikolayevsky@latrobe.edu.au}

\author{Z. Raki\'{c}}
\address{Faculty of Mathematics, University of Belgrade, Serbia}
\email{zrakic@matf.bg.ac.rs}

\date{\today}

\thanks{Y.Nikolayevsky is partially supported by ARC Discovery grant DP130103485. Z.Raki\'{c} is partially supported by the Serbian Ministry of Education and Science, project No. 174012.}

\subjclass[2010]{Primary: 15A69, 53B20; Secondary: 53C25}
\keywords{Algebraic curvature tensor, Jacobi operator, Osserman property, duality principle}

\begin{abstract}
We prove that for an algebraic curvature tensor on a pseudo-Euclidean space, the Jordan-Osserman condition implies the Raki\'{c} duality principle, and that the Osserman condition and the duality principle are equivalent in the diagonalisable case.
\end{abstract}

\maketitle

\section{Introduction}
\label{s:intro}

Throughout the paper, $\bbf$ is the field $\br$ of real numbers or the field $\bc$ of complex numbers. Let $\ip$ be an inner product (a non-degenerate symmetric bilinear form) on $\Fn$, and let $\|\cdot\|^2$ be the associated quadratic form. When $\bbf=\br$, we will additionally distinguish between the Riemannian and the pseudo-Riemannian case depending on the signature of $\ip$. We say that $X \in \Fn$ is \emph{null}, if $\|X\|^2=0$.

An \emph{algebraic curvature tensor} $\act$ on $(\Fn, \ip)$ is a $(3,1)$ tensor having the same algebraic symmetries as that of the curvature tensor at a point of a Riemannian space. In more detail, $\act$ is a three-linear map, $(X,Y,Z) \mapsto \act(X,Y)Z \in \Fn$ for $X,Y,Z \in \Fn$ such that
\begin{equation}\label{eq:actdef}
\begin{gathered}
\act(X,Y)Z=-\act(Y,X)Z, \\
\act(X,Y)Z+\act(Y,Z)X+\act(Z,X)Y=0, \\
\<\act(X,Y)Z,V\>=\<\act(Z,V)X,Y\>.
\end{gathered}
\end{equation}

Given an algebraic curvature tensor $\act$ on $(\Fn, \ip)$ we denote $\act_X$, for $X \in \Fn$, the corresponding \emph{Jacobi operator} defined by $\act_X(Y) = \act(Y,X)X$ for $Y \in \Fn$. The Jacobi operator $\act_X$ is self-adjoint relative to the inner product $\ip$.

An algebraic curvature tensor $\act$ on  $(\Fn, \ip)$ is said to be \emph{Osserman}, if the eigenvalues of the Jacobi operator $\act_X$ and their multiplicities depend only on $\|X\|^2$, for $X \in \Fn$. In what follows it will be more convenient to adopt the following, equivalent definition. Denote $\chi_X=\chi_X(t)$ the characteristic polynomial of $\act_X$. The coefficient $f_j(X)$ of the term $t^j, \; 0 \le j \le n$, in $\chi_X(t)$ is a homogeneous polynomial of degree $2(n-j)$ of the coordinates of $X$. We say that $\act$ is \emph{Osserman}, if $f_j(X)=a_j \|X\|^{2(n-j)}$, for all $0 \le j \le n$, where $a_j \in \bbf$:

\begin{definition}\label{d:oss}
An algebraic curvature tensor $\act$ on $(\Fn, \ip)$ is said to be \emph{Osserman}, if there exists $P \in \bbf[t,y]$ such that $\chi_X(t)=P(t,\|X\|^2)$, for all $X \in \Fn$.
\end{definition}

In the real case, in the Riemannian signature, the eigenvalues of $\act_X$ and their multiplicities completely determine the conjugacy class of $\act_X$. This is no longer true in the indefinite signature, so one can separately define timelike and spacelike Osserman algebraic curvature tensors in the obvious way, but these two properties are easily seen to be equivalent. The picture is even more involved: the Jacobi operator does not need to be $\br$- or even $\bc$-diagonalisable and can have a nontrivial nilpotent part. To capture that finer structure, an algebraic curvature tensor $\act$ on $(\Fn, \ip)$ is said to be \emph{Jordan-Osserman}, if it is Osserman and for all non-null $X \in \Fn$, the Jordan normal form (the number and the sizes of the Jordan blocks) is the same. Note that the Jacobi operator of a Jordan-Osserman algebraic curvature tensor on $\Rn$ is allowed to have non-real eigenvalues. There is an extensive literature on the Osserman property, see \cite{Gil} and the references therein.

The motivation to study the Osserman property comes from Riemannian Geometry. Given a Riemannian manifold $M^n$, let $R$ be its curvature tensor and $R_X$ be the corresponding Jacobi operator. The manifold $M^n$ is said to be \emph{pointwise Osserman} if $R$ is Osserman at every point $x \in M^n$, and \emph{globally Osserman} if the eigenvalues of $R_X$ and their multiplicities are constant, for all $X$ in the unit tangent bundle of $M^n$. Locally two-point homogeneous spaces are globally Osserman, since the isometry group of each of those spaces is transitive on its unit tangent bundle. Osserman \cite{O} conjectured that the converse is also true. At present, the Osserman Conjecture is almost completely resolved by the results of Chi \cite{C}, for dimensions $n\neq 4k, \; k>1$, and $n=4$, and the first named author \cite{N1, N2, N3}, in all the remaining cases, with the few exceptions in dimension $n=16$. One of the crucial steps in the known proofs of the Osserman Conjecture is the following \emph{Raki\'c duality principle} \cite{R}:

\begin{quote}
\emph{Suppose $\mathcal{R}$ is an Osserman algebraic curvature tensor and $X, Y$ are unit vectors. Then $Y$ is an eigenvector of $\mathcal{R}_X$ if and only if $X$ is an eigenvector of $\mathcal{R}_Y$ (with the same eigenvalue).}
\end{quote}

The duality principle in the pseudo-Riemannian settings is introduced in \cite{AR1}, where it is extensively studied for diagonalisable pseudo-Riemannian manifolds and also proved for $n=4$. Recently, for algebraic curvature tensors in Riemannian signature, it was shown that the duality principle is in fact \emph{equivalent} to the Osserman property (in \cite{BM} for $n \le 4$, and in \cite{NR} for all $n$). In \cite{AR2} this equivalence was extended to the Lorentzian signature, and it was also shown that the duality principle holds for algebraic curvature tensors $\act$ with the Clifford structure (all such $\act$ are Osserman). Further results on the duality principle can be found in \cite{A1, A2}.

In this paper we study the relationship between the duality principle and the Osserman property in more general settings. We adopt the following definition. \begin{definition}\label{d:dua}
An algebraic curvature tensor $\act$ on $(\Fn, \ip)$ is said to satisfy the \emph{duality principle}, if for any non-null $X \in \Fn$ and any eigenvector $Y$ of $\act_X$, the vector $X$ is an eigenvector of $\act_Y$.
\end{definition}

The reason for excluding null vectors is explained in \cite[Remark~4]{AR1} and \cite[Section~6]{AR2}. We have the following theorem.

\begin{theorem}\label{t:jor}
A Jordan-Osserman algebraic curvature tensor on $(\Fn, \ip)$ satisfies the duality principle.
\end{theorem}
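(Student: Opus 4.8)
The plan is to track the Jacobi operators along the line $s\mapsto X+sY$ and to exploit the rigidity of their Jordan normal form. First I would polarise: writing $\act_{X,Y}$ for the self-adjoint operator $\act_{X,Y}W=\act(W,X)Y+\act(W,Y)X$, one has the quadratic expansion $\act_{X+sY}=\act_X+s\,\act_{X,Y}+s^2\act_Y$. Two elementary identities, $\act_YY=0$ and $\act_{X,Y}Y=\act(Y,X)Y=-\act_YX$, show that the assertion ``$X$ is an eigenvector of $\act_Y$'' is equivalent to $\act_{X,Y}Y\in\bbf X$; moreover, once this is known, pairing $\act_YX$ with $X$ and using $\<\act_YX,X\>=\<\act_XY,Y\>$ identifies the eigenvalue as $\la\|Y\|^2/\|X\|^2$. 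Since $\act_XX=0$, an eigenvector $Y$ with eigenvalue $\la\neq0$ is automatically orthogonal to $X$, while for $\la=0$ one may subtract $\|X\|^{-2}\<X,Y\>X$ from $Y$ without changing either the eigenvector property or $\act_YX$; so I assume throughout that $\<X,Y\>=0$, whence $\|X+sY\|^2=\|X\|^2+s^2\|Y\|^2$ and $X+sY$ is non-null for all but at most two values of $s$.

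The engine is the Osserman hypothesis read through Cayley--Hamilton. Putting $A(s)=\act_{X+sY}$ and $\phi(s)=\|X+sY\|^2$, Definition~\ref{d:oss} gives the operator identity $P(A(s),\phi(s))=0$ for all $s$. The genuinely new difficulty is a \emph{null} eigenvector $Y$, for which $\act_Y$ is merely nilpotent (a standard consequence of the Osserman condition, since $\chi_Y(t)=P(t,0)=\pm t^n$) and the Osserman property is unavailable at $Y$. In that case $\phi\equiv\|X\|^2$, so $\la$ is a root of $P(\cdot,\phi)$; applying $P(A(s),\phi)=0$ to $Y$, using the exact relation $A(s)Y=\la Y+s\,\act_{X,Y}Y$ (a consequence of $\act_YY=0$) together with the divided-difference identity, the $Y$-terms cancel and one is left with
\[ Q\bigl(A(s)\bigr)\,\act_{X,Y}Y=0\quad\text{for all }s,\qquad Q(t)=\chi_X(t)/(t-\la). \]
Because $Q$ still contains the factor $(t-\mu)^{m_\mu}$ for every eigenvalue $\mu\neq\la$ (here $m_\mu$ is the multiplicity), $Q(A(s))$ annihilates each generalised eigenspace $V_\mu(A(s))$ with $\mu\neq\la$; hence this identity constrains only the $V_\la$-component of $\act_{X,Y}Y=-\act_YX$, forcing it into $\ker(A(s)-\la\,\id)^{m_\la-1}$, while leaving the other components to be controlled separately.

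Here the Jordan--Osserman hypothesis enters. Along the non-null line $X+sY$ the eigenvalues of $A(s)$ are constant and distinct, so the spectral projections $\Pi_\mu(s)$ onto $V_\mu(A(s))$ depend analytically on $s$, and every kernel dimension $\dim\ker(A(s)-\mu\,\id)^k$ is independent of $s$. Differentiating a smoothly moving eigenvector $y(s)\in\ker(A(s)-\la\,\id)$ through $Y$ at $s=0$ places $\act_{X,Y}Y$ in $\operatorname{im}(\act_X-\la\,\id)=\ker(\act_X-\la\,\id)^{\perp}$, that is, $\act_YX\perp\ker(\act_X-\la\,\id)$; combining this with the generalised-eigenspace information of the previous step and with the mutual orthogonality of the $V_\mu(\act_X)$ (valid since $\act_X$ is self-adjoint and $\ip$ is non-degenerate), I would show that $\act_YX$ can survive only in $\bbf X\subseteq V_0(\act_X)$. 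For a non-null $Y$ the same scheme applies after replacing the straight line by an analytic constant-norm curve $Z(\theta)$ in $\operatorname{span}(X,Y)$ with $Z(0)=X$ and $\dot Z(0)\parallel Y$ (trigonometric or hyperbolic according to the sign of $\|X\|^2\|Y\|^2$, and over $\bc$ always the former); there $\phi\equiv\|X\|^2$ again, with the added benefit that the Osserman property now holds at $Y$ as well. Non-real eigenvalues in the real case are accommodated by passing to the complexification $\Cn$.

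I expect the main obstacle to be exactly the non-diagonalisable interaction: ruling out the components of $\act_YX$ in the generalised eigenspaces $V_\mu(\act_X)$ with $\mu\neq\la$. The naive self-adjointness manipulations of $\<\act_YX,W\>$ are circular, and the characteristic-polynomial identity above controls only the $V_\la$-part; genuinely closing the argument requires the \emph{full} constancy of the Jordan type (all the dimensions $\dim\ker(A(s)-\mu\,\id)^k$, equivalently the minimal polynomial) along the family, together with the nilpotency of $\act_Y$ in the null case. Making the deformation of the Jordan chains precise enough to upgrade $\act_YX\perp\ker(\act_X-\la\,\id)$ to $\act_YX\in\bbf X$ is the crux.
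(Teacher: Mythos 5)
There is a genuine gap here, and you name it yourself: your argument delivers only $\act_YX\perp\ker(\act_X-\la\,\id)$ plus the Cayley--Hamilton constraint on the $V_\la$-component, and the promised upgrade to $\act_YX\in\bbf X$ is never carried out. The gap is not a technicality that more careful bookkeeping of Jordan chains would fix; it is structural, and it comes from varying the Jacobi operator only along the single line $s\mapsto X+sY$ (or a single constant-norm curve in $\operatorname{span}(X,Y)$). One direction of perturbation can only ever produce one linear condition's worth of information, whereas the target statement $\act_YX\in\bbf X$ is equivalent to $\<\act_YX,T\>=0$ for \emph{every} $T\perp X$, a whole hyperplane of conditions. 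Your divided-difference identity $Q(A(s))\,\act_{X,Y}Y=0$ with $Q(t)=\chi_X(t)/(t-\la)$ is correct but, as you note, it only forces the $V_\la$-component of $\act_YX$ into $\ker(\act_X-\la\,\id)^{m_\la-1}$ and says nothing about the components in $V_\mu(\act_X)$, $\mu\ne\la$; no amount of iterating it along the same one-parameter family will reach those components.

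The paper closes exactly this gap with two moves you missed. First, it perturbs the \emph{base point} in an arbitrary direction: for generic $X$ (which under Jordan--Osserman is every non-null $X$, since the Jordan type is constant) there are, by Kato-type perturbation theory, smooth families $\la(Z)$, $f(Z)$ with $\act_Zf(Z)=\la(Z)f(Z)$, $f(X)=e$. Second --- and this is the key trick --- instead of differentiating the vector equation (which leaves the unknown term $f'$, the source of your $\operatorname{im}(\act_X-\la\,\id)$ obstruction), it differentiates the \emph{scalar} pairing $\<\act_{Z(s)}f,f\>=\la\|f\|^2$ along a curve with $Z(0)=X$, $Z'(0)=T$: self-adjointness of $\act_X$ together with the eigen-equation kills the $f'$-terms identically, leaving
\begin{equation*}
2\<\act_eX,T\>=(d\la)_{|X}(T)\,\|e\|^2 \qquad\text{for all } T\perp X.
\end{equation*}
The Osserman condition plus the homogeneity $\act_{cY}=c^2\act_Y$ forces $\la(Y)=\mu\|Y\|^2\|X\|^{-2}$, so $(d\la)_{|X}(T)=0$ for $T\perp X$, and nondegeneracy of $\ip$ gives $\act_eX\in(X^\perp)^\perp=\bbf X$ at once. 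Note also that this makes your separate treatment of a null eigenvector unnecessary: when $\|e\|^2=0$ the right-hand side vanishes trivially and the same identity still yields the conclusion, whereas in your scheme the null case carried most of the machinery (nilpotency of $\act_Y$, constancy of $\phi$) while still not closing. Your polarisation identities, the reduction to $\<X,Y\>=0$, and the eigenvalue identification $\la\|Y\|^2/\|X\|^2$ are all correct and consistent with the paper, but the proof as proposed is incomplete at its declared crux.
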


Theorem~\ref{t:jor} admits a partial converse. For an algebraic curvature tensor $\act$ on $\Fn$ denote $s_\act$ the set of those $X \in \Fn$ for which $\act_X$ is diagonalisable over $\bbf$.

\begin{definition}\label{d:semis}
An algebraic curvature tensor $\act$ on $\Fn$ is said to be \emph{semisimple}, if the set $s_\act$ has a nonempty interior in $\Fn$. Denote $\cS$ the set of semisimple algebraic curvature tensors on $\Fn$.
\end{definition}

Clearly, in the real case, in the Riemannian signature, any algebraic curvature tensor is semisimple. By the result of \cite[Theorem~1.2]{GI}, a real Jordan-Osserman algebraic curvature tensor in the signature $(p,q)$ is semisimple, unless $p=q$.

In general, it is not hard to see and will be shown in Section~\ref{s:eq} that for a semisimple $\act$, the Osserman and the Jordan-Osserman conditions are equivalent, so by Theorem~\ref{t:jor}, a semisimple Osserman algebraic curvature tensor satisfies the duality principle. The following theorem establishes the converse.

\begin{theorem}\label{t:rdp}
Suppose $\act$ is a semisimple algebraic curvature tensor on $(\Fn, \ip)$. Then $\act$ is Osserman if and only if it satisfies the duality principle.
\end{theorem}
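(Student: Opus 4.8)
The plan is to prove the two implications separately; the forward one is immediate from the machinery already in place, so the real content is the converse. If $\act$ is semisimple and Osserman, then by the equivalence to be established in Section~\ref{s:eq} it is Jordan-Osserman, and Theorem~\ref{t:jor} yields the duality principle. For the converse I would assume that $\act$ is semisimple and satisfies the duality principle, and aim to show that each power sum $p_m(X)=\operatorname{tr}(\act_X^m)$, a homogeneous polynomial of degree $2m$ in $X$, satisfies $p_m(X)=c_m\|X\|^{2m}$. Once this holds for all $m$, Newton's identities (valid in characteristic zero) express the coefficients $f_j(X)$ of $\chi_X$ as weighted-homogeneous polynomials in the $p_m$, forcing $f_j(X)=a_j\|X\|^{2(n-j)}$ and hence $\chi_X(t)=P(t,\|X\|^2)$, which is precisely Definition~\ref{d:oss}.

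The heart of the argument is a differentiation computation carried out on the open set $\Omega=\operatorname{int}(s_\act)$, which is nonempty by semisimplicity. First I would record the algebraic identity
\[
\<(D_U\act_X)(v),v\>=2\<\act_v X,U\>,
\]
where $D_U$ is the directional derivative in the direction $U$; this follows from $(D_U\act_X)(Y)=\act(Y,U)X+\act(Y,X)U$ together with the pair symmetry in \eqref{eq:actdef} and the self-adjointness of $\act_v$, and it holds for every $v$. Now fix a non-null $X\in\Omega$ and a pseudo-orthonormal eigenbasis $e_1,\dots,e_n$ of $\act_X$ (available because $\act_X$ is diagonalisable over $\bbf$ and its eigenspaces, being mutually orthogonal and nondegenerate, carry such bases), with $\act_X e_i=\lambda e_i$ and $\<e_i,e_i\>=\ve_i$. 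Applying the identity with $v=e_i$ and using the duality principle—$X$ is non-null and $e_i$ is an eigenvector of $\act_X$, so $\act_{e_i}X=\mu_i X$ for some $\mu_i$—gives $\<(D_U\act_X)(e_i),e_i\>=2\mu_i\<X,U\>$. Summing against $\lambda^{\,m-1}$ then yields
\[
D_U p_m(X)=m\operatorname{tr}\!\bigl(\act_X^{m-1}(D_U\act_X)\bigr)=2m\,\<X,U\>\sum_i\ve_i\lambda_i^{\,m-1}\mu_i,
\]
so that $D_U p_m(X)=0$ for every $U$ tangent to the level set of $\|\cdot\|^2$ through $X$, i.e.\ whenever $\<X,U\>=0$.

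It remains to promote this infinitesimal invariance to the desired polynomial identity. Since $D_U p_m(X)=0$ for all $U\in X^\perp$ and $X$ is non-null, the gradient of $p_m$ at $X$ is proportional to $X$; combining this with Euler's relation $D_X p_m=2m\,p_m$, I would show that $\phi(X)=p_m(X)\,\|X\|^{-2m}$ has vanishing differential at every non-null point of $\Omega$, its derivative vanishing both on $X^\perp$ and in the direction $X$, which together span $\Fn$. Hence $\phi$ is constant on a ball inside $\Omega$, so the polynomial $p_m(X)-c_m\|X\|^{2m}$ vanishes on an open set and therefore identically, completing the converse.

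The main obstacle I anticipate is not a single step but making the differentiation argument uniformly valid in both the real and complex settings and in arbitrary signature: one must ensure the eigenbasis can be chosen pseudo-orthonormal with non-null vectors (using nondegeneracy of the eigenspaces of a self-adjoint diagonalisable operator), verify that the duality principle is invoked with the correct base vector (the non-null $X$, rather than the possibly null eigenvector $e_i$), and, in the complex case, read $D_U$ as a holomorphic derivative and run the ``locally constant'' step on a connected ball before appealing to polynomial rigidity. The algebraic identity and its interaction with duality are exactly what force every term to be proportional to $\<X,U\>$, and this is the precise mechanism by which the duality principle encodes the Osserman condition.
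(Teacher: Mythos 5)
Your proposal is correct, and your converse takes a genuinely different route from the paper's. The paper works branch-by-branch with individual eigenvalues: it first shows (Lemma~\ref{l:gene}, via invariant factors and a Zariski-openness argument) that generic vectors are dense, then uses Kato's perturbation theory (Lemma~\ref{l:dlambda}) to get smooth functions $\la(Y)$, $f(Y)$ with $\act_Y f(Y)=\la(Y)f(Y)$ near a generic non-null $X \in s_\act$, derives $2\<\act_e X,T\>=(d\la)_{|X}(T)\|e\|^2$ for a non-null eigenvector $e$ (non-nullity obtained, as in your argument, from non-degeneracy of the eigenspaces of the diagonalisable self-adjoint $\act_X$), and concludes from duality that $d(\|X\|^{2(j-n)}f_j)_{|X}=0$ for each coefficient $f_j$ of $\chi_X$, finishing by polynomial rigidity. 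You instead differentiate the power sums $p_m(X)=\operatorname{tr}(\act_X^m)$, which are polynomials in $X$ outright, using the pointwise identity $\<(D_U\act_X)v,v\>=2\<\act_v X,U\>$ --- this is precisely the algebraic half of the paper's \eqref{eq:dl}, which the paper obtains by differentiating $\<\act_Y f(Y),f(Y)\>=\la(Y)\|f(Y)\|^2$ along a curve --- summed over a full pseudo-orthonormal eigenbasis, and you close with Newton's identities. Your computations check: the identity follows from \eqref{eq:actdef} and self-adjointness exactly as you indicate, $\operatorname{tr}B=\sum_i\ve_i\<Be_i,e_i\>$ is the correct trace formula in a pseudo-orthonormal basis, duality is invoked at the non-null base point $X$ (so possibly null eigenvectors would cause no harm, though yours are non-null by construction), and the criticality of $p_m\|X\|^{-2m}$ on a ball of non-null vectors in $\operatorname{int}(s_\act)$ (which exists since the null cone has empty interior) propagates to the polynomial identity $p_m(X)=c_m\|X\|^{2m}$ everywhere. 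What your route buys is the complete elimination of both lemmas from the converse: since $p_m$ is polynomial, no smooth selection of eigenvalue branches is needed, hence neither genericity nor perturbation theory, and the argument runs uniformly over $\br$ and $\bc$; the cost is invoking duality at all eigenvectors simultaneously and the detour through Newton's identities, where the paper's argument yields the criticality of each rescaled $f_j$ directly. One caveat: for the forward direction you cite the equivalence of the Osserman and Jordan-Osserman conditions for semisimple $\act$ without proving it; a self-contained write-up should include the paper's short argument --- for Osserman $\act$ the distinct eigenvalues are $\mu_k\|X\|^2$, so $F_X(t)=\prod_{k=1}^p(t-\mu_k\|X\|^2)$ satisfies $F_X(\act_X)=0$ on the interior of $s_\act$, hence identically by polynomial rigidity, making $\act_X$ diagonalisable for every non-null $X$ --- after which Theorem~\ref{t:jor} applies exactly as you say.
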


\section{Proofs of the Theorems}
\label{s:eq}

Let $\act$ be an algebraic curvature tensor on $(\Fn, \ip)$. We call $X \in \Fn$ \emph{generic}, if there is a neighbourhood $U$ of $X$ in $\Fn$ such that for all $Y \in U$, the number and the sizes of the Jordan blocks in the Jordan normal form and the number of distinct eigenvalues of $\act_Y$ is the same (where in the case $\bbf=\br$ we also consider complex eigenvalues).

Note that if $\act$ is Jordan-Osserman, then any non-null $X \in \Fn$ is generic, by definition. The following lemma justifies our terminology.

\begin{lemma}\label{l:gene}
For an arbitrary algebraic curvature tensor $\act$ on $(\Fn, \ip)$, the set of generic vectors is open and dense in $\Fn$.
\end{lemma}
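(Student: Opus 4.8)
The plan is to get openness for free from the definition and to prove density by a semicontinuity argument applied to a finite family of rank functions, peeling off the Jordan data in two stages: first the number of distinct eigenvalues, then the block sizes at each eigenvalue. Openness is immediate: if $X$ is generic with a witnessing open neighbourhood $U$ on which the number and sizes of the Jordan blocks and the number of distinct eigenvalues of $\act_Y$ are constant, then this same $U$ witnesses genericity of every $Y \in U$. The engine for density will be the following elementary fact, which I would isolate first: an integer-valued, locally bounded above, lower semicontinuous function $f$ on a topological space (i.e.\ $\{f>c\}$ is open for all $c$) is locally constant on an open dense set. Openness of its local-constancy locus is clear, and given any nonempty open $W$, choosing a subneighbourhood on which $f\le C$ and letting $M$ be the maximum there (attained, since $f$ is integer-valued), the open set $\{f\ge M\}=\{f>M-1\}$ meets $W$ in a nonempty open set on which $f\equiv M$. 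I note that the entire argument uses only that $X\mapsto\act_X$ is polynomial, never the self-adjointness of $\act_X$.

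First I would apply this to $m(X)$, the number of distinct eigenvalues of $\act_X$ over $\bar\bbf$ (taken over $\bc$ when $\bbf=\br$, as the definition of generic prescribes). The coefficients of $\chi_X$ are polynomials in $X$, and since the roots of a monic polynomial vary continuously with its coefficients, a multiple root can only split, never merge, under a small perturbation; hence $m$ is lower semicontinuous and bounded by $n$. By the fact above, inside any prescribed nonempty open set there is a nonempty open $W_1$ on which $m\equiv m_1$ is constant.

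The technical heart is to extract continuous eigenvalue branches on $W_1$. Fixing $X_1\in W_1$, I would surround the distinct roots $\mu_1,\dots,\mu_{m_1}$ of $\chi_{X_1}$ by disjoint discs $D_i$. By continuity of roots, for $X$ near $X_1$ each $D_i$ contains a fixed number $a_i$ of roots counted with multiplicity; constancy of $m$ then forces each $D_i$ to contain exactly one distinct root $\lambda_i(X)$, necessarily of multiplicity $a_i$. Thus on a neighbourhood $W_2\subseteq W_1$ of $X_1$ the multiplicities $a_i$ are constant and the branches $\lambda_i(X)$ (for instance, the normalised sum of the roots lying in $D_i$) are continuous. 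I expect this step, the passage from "constant number of distinct roots" to "continuous branches with constant multiplicities", to be the main obstacle, as it is where one must invoke the continuity of roots carefully.

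On $W_2$ I would finally consider the finitely many functions $r_{i,k}(X)=\operatorname{rank}\bigl((\act_X-\lambda_i(X)\,\id)^k\bigr)$ for $1\le i\le m_1$ and $1\le k\le n$, taken on the complexification when $\bbf=\br$. Each is the rank of a continuously varying operator, hence lower semicontinuous and bounded by $n$, so by the fact above each is locally constant on an open dense subset of $W_2$, and the finite intersection $W_3$ of these subsets is again open and dense, in particular nonempty. On $W_3$ all the dimensions $\dim\ker(\act_X-\lambda_i\,\id)^k=n-r_{i,k}$ are constant, and these determine the number and sizes of the Jordan blocks at each $\lambda_i$ (the number of blocks of size exactly $k$ being $2\dim\ker(\act_X-\lambda_i\,\id)^k-\dim\ker(\act_X-\lambda_i\,\id)^{k-1}-\dim\ker(\act_X-\lambda_i\,\id)^{k+1}$). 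Together with the constant count $m_1$ of distinct eigenvalues, this makes the complete Jordan normal form constant on $W_3$, so every point of $W_3$ is generic. Since $W_3$ lies inside the arbitrary open set we began with, generic vectors are dense, which together with openness completes the proof.
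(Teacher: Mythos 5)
Your proof is correct, but it takes a genuinely different route from the paper's. The paper argues algebraically: it computes the invariant factors of $\act_X-t\,\id$ over the rational function field $\bc(x_1,\dots,x_n)$ by running Euclid's algorithm on the $k\times k$ minors of $\act_X - t\,\id$, and checks that these computations specialise correctly at every $X$ outside the zero sets of finitely many polynomials (denominators, leading coefficients, and the locus of accidental root coincidences), producing a single nonempty Zariski open set $S'\subset\Cn$ on which the entire Jordan structure is constant; the real case is then handled by complexifying $\act$ and intersecting $S'$ with $\Rn$. You instead argue purely topologically, in two semicontinuity stages: first the number of distinct eigenvalues (lower semicontinuous because under perturbation roots can split but never merge), and then, along the continuous eigenvalue branches extracted via your disjoint-discs argument, the ranks $\operatorname{rank}\bigl((\act_X-\lambda_i(X)\,\id)^k\bigr)$, each handled by your elementary lemma on integer-valued lower semicontinuous functions. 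Every step is sound: the disc argument correctly upgrades constancy of the count of distinct roots to continuous branches of constant multiplicity, the kernel dimensions $\dim\ker(\act_X-\lambda_i\id)^k$ do determine the block sizes by the formula you quote, and your treatment of $\bbf=\br$ (complexify the operator but not the parameter) is if anything cleaner than the paper's; the only wording nit is that on $W_3$ the rank data is \emph{locally} constant rather than constant, which is immaterial since genericity is a local condition. What the paper's heavier machinery buys is a stronger conclusion than the lemma states: its generic locus contains a Zariski open set, hence is connected with complement contained in a proper algebraic subvariety, a fact the paper explicitly records; your set is merely open and dense, which is exactly what the lemma asserts and all that the proofs of Theorems~\ref{t:jor} and~\ref{t:rdp} use. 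Conversely, your argument is more elementary and more general: as you observe, it never uses self-adjointness, and in fact it needs only continuity (not even polynomiality) of $X\mapsto\act_X$, so it applies verbatim to any continuous family of operators, whereas the paper's specialisation argument genuinely needs the polynomial dependence.
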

\begin{proof}
We first consider the complex case. Let $\act$ be an algebraic curvature tensor on $(\Cn, \ip)$. For $X \in \Cn$ denote $p(X)$ the number of different eigenvalues of the Jacobi operator $\act_X$, and denote $p=\max \{p(X) \, | \ X \in \Cn\}$. As the coefficients of the characteristic polynomial $\chi_X(t)$ of $\act_X$ are polynomials in the coordinates $(x_1, \dots, x_n)$ of $X$, the set of those $X \in \Cn$ for which $p(X)=p$ is a Zariski open subset of $\Cn$. For $X \in \Cn$, the $k$-th invariant factor of $\act_X, \; k=1, \dots, n$, is defined to be the greatest common divisor over $\bc[t]$ of the $k \times k$ minors of the matrix $\act_X-t\id$. All such minors (denote them $m^{(k)}_1(t; X), \dots, m^{(k)}_N(t; X), \; N=\binom{n}{k}^2$) are polynomials in $t$ whose coefficients are polynomials in the coordinates $(x_1, \dots, x_n)$ of $X$. We can consider them as polynomials of $t$ with the coefficients in the field of rational functions $\mathbf{F}=\bc(x_1, \dots, x_n)$, and then, by the repeated use of the Euclid's algorithm, find their greatest common divisor $I^{(k)}(t; X)$ over $\mathbf{F}[t]$. All $m^{(k)}_i(t; X), \; i=1, \dots, N$, are divisible by $I^{(k)}(t; X)$ over $\mathbf{F}[t]$, and hence for every fixed $X_0 \in \Cn$ outside the union of the zero sets of a finite number of polynomials in $(x_1, \dots, x_n)$ (the denominators and the leading coefficients of $I^{(k)}$ and of the $m^{(k)}_i$'s), all the polynomials $m^{(k)}_i(t; X_0)$ are divisible by $I^{(k)}(t; X_0)$ over $\bc[t]$. Furthermore, since the polynomials $h^{(k)}_i = m^{(k)}_i/I^{(k)}, \; i=1, \dots, N$, are coprime over $\mathbf{F}[t]$, there exist $a_1, \dots, a_N \in \mathbf{F}[t]$ such that $\sum_{i=1}^N a_i h^{(k)}_i = 1$. It follows that for every fixed $X_0 \in \Cn$ outside the union of the zero sets of a finite number of polynomials in $(x_1, \dots, x_n)$ (the denominators and the leading coefficients of $I^{(k)}$ and of the $m^{(k)}_i$'s and $a_i$'s), the polynomials $h^{(k)}_i(t; X_0)$ are coprime over $\bc[t]$. Therefore, for all $X$ from a nonempty, Zariski open subset $S \subset \Cn$, the $k$-th invariant polynomial of $\act_{X}$ is $I^{(k)}(t; X)$. Then for all $X \in S$, the elementary divisors of $\act_{X}$ are given by $I^{(k)}(t; X)/I^{(k-1)}(t; X)$ (and just $I^{(1)}(t; X)$, if $k=1$), so outside the sets where the leading coefficients or the denominators vanish, and where there are ``accidental" root coincidences, each of them has the same number and the same powers of factors in the decomposition into linear factors over $\bc[t]$. It follows that there exists a nonempty, Zariski open subset $S' \subset \Cn$ (which is then open, dense and connected in the Euclidean topology) such that for all $X \in S'$, the number and the sizes of the Jordan blocks in the Jordan normal form and the number of distinct eigenvalues of $\act_X$ is the same.

In the real case, we consider the complexification of $\ip$ and of $\act$, and construct the subset $S' \subset \Cn$ as above, and then use the fact that the intersection of a nonempty, Zariski open subset of $\Cn$ with $\Rn$ is open and dense in $\Rn$ in the Euclidean topology.
\end{proof}

We need the following fact.

\begin{lemma}\label{l:dlambda}
Let $\act$ be an algebraic curvature tensor on $(\Fn, \ip)$, where $\bbf=\br$ or $\bc$. Let $X \in \Fn$ be generic, and let $e \in \Fn$ be an eigenvector of $\act_X$ with the eigenvalue $\mu$. Then on a neighbourhood $U'$ of $X$, there exist a smooth function $\la(Y)$ with $\la(X)=\mu$ and a smooth vector function $f(Y)$ with $f(X)=e$ such that $\act_{Y}f(Y) = \la(Y) f(Y)$, for all $Y \in U'$. Moreover, for any $T \perp X$ we have
\begin{equation} \label{eq:dl}
    2\<\act_e X,T\>=(d\lambda)_{|X}(T)\|e\|^2.
\end{equation}
\end{lemma}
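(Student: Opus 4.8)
The plan is to split the statement into two parts: first, the construction of the smooth eigendata $\la(Y)$ and $f(Y)$, and second, the derivative identity \eqref{eq:dl}, which I would obtain by differentiating the eigenvalue equation and invoking the curvature symmetries. The smoothness part is where the genericity hypothesis does all the work, while the identity itself is a short symmetry computation.

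For the eigenvalue, I would use that genericity of $X$ makes the number $p$ of distinct eigenvalues of $\act_Y$, together with the entire Jordan block structure, constant for $Y$ near $X$ (in the real case this is read off from the complexification). Choosing a small contour $\gamma\subset\bc$ enclosing $\mu$ but no other eigenvalue of $\act_X$, for $Y$ near $X$ the contour encloses exactly one eigenvalue $\la(Y)$ of $\act_Y$, of constant algebraic multiplicity $a$, and the formula $\la(Y)=(2\pi i a)^{-1}\oint_\gamma t\,\operatorname{tr}((t\,\id-\act_Y)^{-1})\,dt$ exhibits $\la$ as a smooth function with $\la(X)=\mu$ (and real when $\mu$ is). With $\la$ in hand I would set $B(Y)=\act_Y-\la(Y)\,\id$; genericity forces the number of Jordan blocks at $\la(Y)$, i.e. $\dim\ker B(Y)$, to be a constant $g$, so $B(Y)$ has constant rank $n-g$. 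I would then produce the eigenvector by a constant-rank/Cramer argument: extend $e$ to a basis $e=u_1,\dots,u_n$ with $u_1,\dots,u_g$ spanning $\ker B(X)$; since $B(X)u_{g+1},\dots,B(X)u_n$ are independent, so are $B(Y)u_{g+1},\dots,B(Y)u_n$ for $Y$ near $X$, and they span $\operatorname{Im}B(Y)$. Solving $\sum_{j>g}c_j(Y)B(Y)u_j=-B(Y)u_1$ by Cramer's rule gives smooth $c_j$ with $c_j(X)=0$, so $f(Y)=u_1+\sum_{j>g}c_j(Y)u_j$ is a smooth eigenvector with $f(X)=e$ and $\act_Y f(Y)=\la(Y)f(Y)$.

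For \eqref{eq:dl}, I would differentiate $\act(f(Y),Y)Y=\la(Y)f(Y)$ along $Y=X+sT$ at $s=0$. Writing $f'=(df)_{|X}(T)$ and $\la'=(d\la)_{|X}(T)$ and using trilinearity of $\act$ gives $\act(f',X)X+\act(e,T)X+\act(e,X)T=\la' e+\mu f'$. Taking the inner product with $e$ and using that $\act_X$ is self-adjoint, $\<\act(f',X)X,e\>=\<\act_X f',e\>=\<f',\act_X e\>=\mu\<f',e\>$, cancels the $f'$-terms and leaves $\<\act(e,T)X,e\>+\<\act(e,X)T,e\>=\la'\|e\|^2$. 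Finally the symmetries in \eqref{eq:actdef} — the pair symmetry together with the resulting antisymmetry $\<\act(A,B)C,D\>=-\<\act(A,B)D,C\>$ in the last two slots — turn each of the two left-hand terms into $\<\act_e X,T\>$, giving $2\<\act_e X,T\>=(d\la)_{|X}(T)\|e\|^2$. (The identity in fact holds for all $T$; the case $T=X$ is consistent with the homogeneity $\la(sY)=s^2\la(Y)$, which is presumably why the statement only records the directions $T\perp X$.)

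I expect the main obstacle to be the smooth dependence of the eigendata rather than the final computation. Smoothness of $\la$ is standard perturbation theory, but producing a genuine (not merely generalized) smooth eigenvector in a possibly indefinite or complex setting is exactly where genericity is indispensable: it is what makes $\operatorname{rank}B(Y)$ locally constant and lets the Cramer construction run without appeal to orthonormal frames, which are unavailable here. Once smoothness is secured, the derivative identity is a clean manipulation whose only subtlety is the cancellation furnished by self-adjointness of $\act_X$.
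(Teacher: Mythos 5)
Your proposal is correct, and the heart of the lemma --- the identity \eqref{eq:dl} --- is obtained exactly as in the paper: the paper differentiates $\<\act_{Z(s)}f(Z(s)),f(Z(s))\>=\la(Z(s))\|f(Z(s))\|^2$ along a curve with $Z(0)=X$, $Z'(0)=T$, which is the same computation as your differentiate-then-pair-with-$e$ version, with the same cancellation of the $f'$-terms via self-adjointness of $\act_X$ and the same use of the pair symmetry (your parenthetical that the identity holds for all $T$, with $T=X$ recovered from the homogeneity $\la(sY)=s^2\la(Y)$, is also accurate). Where you genuinely diverge is the first assertion: the paper disposes of the smooth dependence of $\la(Y)$ and $f(Y)$ in one sentence by citing Kato's perturbation theory, whereas you give a self-contained construction --- the Riesz-type contour formula $\la(Y)=(2\pi i a)^{-1}\oint_\gamma t\,\operatorname{tr}((t\,\id-\act_Y)^{-1})\,dt$ for the eigenvalue, and a constant-rank/Cramer argument for the eigenvector. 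This buys a proof that works verbatim over $\br$ or $\bc$ and in indefinite signature without any external reference, at the cost of one step you state but do not fully justify: that genericity forces $g=\dim\ker(\act_Y-\la(Y)\id)$ to be locally constant. Genericity as defined only fixes the \emph{total} multiset of Jordan block sizes and the number of distinct eigenvalues, so a priori blocks could migrate between eigenvalues; the fix is short, though, and worth recording: lower semicontinuity of rank gives $\dim\ker(\act_Y-\la_j(Y)\id)\le\dim\ker(\act_X-\mu_j\id)$ for each continued eigenvalue $\la_j(Y)$ near $X$, while the sum of these kernel dimensions over $j$ equals the total number of Jordan blocks, which is constant by genericity, so each inequality is an equality. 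With that sentence added, your argument is complete and, for the smoothness half, more elementary and explicit than the paper's citation of \cite{K}.
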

\begin{proof}
The first assertion follows from the fact that for generic $X$, the eigenvalues of $\act_X$ and their corresponding eigenspaces, viewed as the points of the corresponding Grassmannians, depend smoothly (even analytically) on the coordinates of $X$ \cite{K}.

To prove \eqref{eq:dl} we choose a smooth curve $Z(s), \; s \in (-\ve, \ve)$, in $U'$ in such a way that $Z(0)=X$ and $Z'(0)=T$. Then $\act_{Z(s)}f(Z(s)) = \la(Z(s)) f(Z(s))$. Taking the inner product with $f(Z(s))$ and differentiating at $s=0$ we get the required identity.
\end{proof}

We are now in position to prove Theorem~\ref{t:jor} and Theorem~\ref{t:rdp}.

\begin{proof}[Proof of Theorem~\ref{t:jor}]
Suppose $\act$ is a Jordan-Osserman algebraic curvature tensor on $(\Fn, \ip)$. Then any non-null vector $X \in \Fn$ is generic. Moreover, if $e$ is an eigenvector of $\act_X$ with the eigenvalue $\mu$, then, in the notation of Lemma~\ref{l:dlambda}, we obtain $\lambda(Y)=\mu \|Y\|^2 \|X\|^{-2}$, so by \eqref{eq:dl} we get $\<\act_e X,T\>=0$, for all $T \perp X$. It follows that $X$ is an eigenvector of $\act_e$, as required.
\end{proof}

Note that from \eqref{eq:actdef} it follows that if $Y$ is an eigenvector of $\act_X$ with the eigenvalue $\mu_X$, and $X$ is an eigenvector of $\act_Y$ with the eigenvalue $\mu_Y$, then $\mu_X\|Y\|^2=\mu_Y\|X\|^2$.

\begin{proof}[Proof of Theorem~\ref{t:rdp}]
We first suppose that a semisimple algebraic curvature tensor $\act$ on $(\Fn,\ip)$ is Osserman. Then the coefficients of the characteristic polynomial $\chi_{X}(t)$ of $\act_X$ are constant on every quadric $\|X\|^2 = c \ne 0$, as are the eigenvalues of $\act_X$. It follows that for all non-null $X \in \Fn$ the Jacobi operator $\act_X$ has the same number $p$ of distinct eigenvalues $\la_k(X)=\mu_k \|X\|^2, \; k=1, \dots, p$. Consider the polynomial $F_X(t)=\prod_{k=1}^p (t-\mu_k \|X\|^2)$. As $\act_X$ is diagonalisable for $X \in s_\act$, we have $F_X(\act_X)=0$. As $s_\act$ has a nonempty interior, this polynomial identity holds for all $X \in \Fn$, which then implies that $\act_X$ is diagonalisable for all non-null $X \in \Fn$. Therefore $\act$ is Jordan-Osserman, hence it satisfies the duality principle by Theorem~\ref{t:jor}.

Conversely, suppose that a semisimple algebraic curvature tensor $\act$ on $(\Fn,\ip)$ satisfies the duality principle. By Lemma~\ref{l:gene}, there is an open subset $U'$ of $s_\act$ consisting of non-null generic vectors. Let $X \in U'$ and let $\mu \in \bbf$ be an arbitrary eigenvalue of $\act_X$. As $\act_X$ is diagonalisable, the orthogonal complement in $\Fn$ to the eigenspace $E(\mu)$ of $\act_X$ is the (direct, orthogonal) sum of all the other eigenspaces of $\act_X$, and so the restriction of $\ip$ to $E(\mu)$ is non-degenerate. We can therefore choose a non-null eigenvector $e \in \Fn$ corresponding to $\mu$. By the duality principle, $X$ is an eigenvector of $\act_e$, so from \eqref{eq:dl} we obtain that $(d\lambda)_{|X}(T)=0$, for all $T \perp X$. From the homogeneity of $\act_X$ we also have $(d\la)_{|X}(X)=2 \mu$, so $(d\lambda)_{|X}(T)=2 \mu\<X,T\>\|X\|^{-2}$, for all $T \in \Fn$. It follows that $d(\|X\|^{-2}\la)_{|X}=0$, hence $d(\|X\|^{2(j-n)}f_j)_{|X}=0$, for all $0 \le j \le n$, where $f_j$ is the coefficient of $t^j$ in the characteristic polynomial $\chi_X(t)$. As all the vectors $Y$ from the open set $U'$ are non-null, generic and belong to the interior of $s_\act$, we obtain that $d(\|Y\|^{2(j-n)}f_j)_{|Y}=0$ for all $Y \in U'$. Hence for some $a_j \in \bbf$ we have $f_j(Y)=a_j \|Y\|^{2(n-j)}$ for all $Y \in U'$, hence for all $Y \in \Fn$. Thus $\act$ is Osserman.
\end{proof}


\begin{thebibliography}{10}

\bibitem{A1}
V. Andreji\'c,
\emph{Quasi-special Osserman manifolds}, Filomat 28, 623 -- 633, 2014.

\bibitem{A2}
V. Andreji\'c,
\emph{Duality principle and special Osserman manifolds}, Publ. Inst. Math. (Beograd) (N.S.) 94(108), 197 -- 204, 2013.

\bibitem{AR1}
V. Andreji\'c, Z. Raki\'c,
\emph{On the duality principle in pseudo-Riemannian Osserman manifolds}, J. Geom. Phys. 57, 2158 -- 2166, 2007.

\bibitem{AR2}
V. Andreji\'c, Z. Raki\'c,
\emph{On some aspects of duality principle}, Kyoto J. Math. 55,  567 -- 577, 2015.

\bibitem{BM}
M. Brozos-V\'{a}zquez, E. Merino,
\emph{Equivalence between the Osserman condition and the Raki\`c duality principle in dimension $4$}, J. Geom. Phys. 62, 2346 -- 2352, 2012.

\bibitem{C}
Q. S. Chi,
\emph{A curvature characterization  of certain locally rank-one symmetric spaces}, J. Diff. Geom. 28, 187 -- 202, 1988.

\bibitem{Gil}
P. Gilkey,
\emph{The geometry of curvature homogeneous pseudo-Riemannian manifolds.}
ICP Advanced Texts in Mathematics, 2. Imperial College Press, London, 2007.

\bibitem{GI}
P. Gilkey, R. Ivanova,
\emph{Spacelike Jordan Osserman algebraic curvature tensors in the higher signature setting},  Differential geometry, Valencia, 2001, 179 -- 186, World Sci. Publ., River Edge, NJ, 2002.

\bibitem{K} T. Kato, \emph{Perturbation theory for linear operators}, Grundlehren 132, 1976, Springer-Verlag, Berlin, New York.

\bibitem{N1} Y. Nikolayevsky,
\emph{Osserman manifolds of dimension $8$}, Manuscr. Math. 115, 31 -- 53, 2004.

\bibitem{N2} Y. Nikolayevsky,
\emph{Osserman conjecture in dimension $n\neq 8,16$}, Math. Ann. 331, 505 -- 522, 2005.

\bibitem{N3} Y. Nikolayevsky,
\emph{On Osserman manifolds of dimension $16$}, Contemporary Geometry and Related Topics, Proc. Conference, Belgrade, 379 -- 398, 2006.

\bibitem{NR}
Y. Nikolayevsky, Z. Raki\'{c},
\emph{A note on Raki\'{c} duality principle for Osserman manifolds}. Publ. Inst. Math. (Beograd) (N.S.) 94 (108), 43 -- 45, 2013.

\bibitem{O}
R. Osserman,
\emph{Curvature in the eighties}, Amer. Math. Monthly, 97, 731 -- 756, 1990.

\bibitem{R}
Z. Raki\'c,
\emph{On duality principle in Osserman manifolds}, Linear Alg. Appl. 296, 183 -- 189, 1999.

\end{thebibliography}
\end{document}